\definecolor{darkblue}{rgb}{0, 0, .4}
\definecolor{grey}{rgb}{.7, .7, .7}
  \newcommand{\href}[2]{#2}
  \newcommand{\url}[2]{#2}
\newtheorem{theorem}{Theorem}[section]
\newtheorem{lemma}[theorem]{Lemma}
\theoremstyle{definition}
\newtheorem{definition}[theorem]{Definition}
\newtheorem{example}[theorem]{Example}
\theoremstyle{remark}
\numberwithin{equation}{section}
\theoremstyle{theorem}
\newtheorem{corollary}[theorem]{Corollary}
\begin{document}

\title{Positional strategies in games of best choice}

\begin{abstract}
We study a variation of the game of best choice (also known as the secretary
problem or game of googol) under an additional assumption that the ranks of
interview candidates are restricted using permutation pattern-avoidance.  We
describe the optimal positional strategies and develop formulas for the
probability of winning.
\end{abstract}

\author{Aaron Fowlkes and Brant Jones}
\address{Department of Mathematics and Statistics, MSC 1911, James Madison University, Harrisonburg, VA 22807}
\email{\href{mailto:jones3bc@jmu.edu}{\texttt{jones3bc@jmu.edu}}}
\urladdr{\url{http://educ.jmu.edu/\~jones3bc/}}


\date{\today}

\maketitle

\bigskip
\section{Introduction}\label{s:intro}

The game of best choice, also known as the ``secretary problem,'' appeared in
Martin Gardner's 1960 Scientific American column (reprinted in \cite{gardner})
although it has a history which predates this (see e.g. \cite{kadison}).  In
1966, Gilbert and Mosteller \cite{gilbert--mosteller} gave a nice survey of the
problem and solved some variations.  The basic idea is to try to hire the best
candidate out of $N$ applicants for a job, each candidate having a specific
ranking $1$ (worst) through $N$ (best).  When interviewing the candidates, the
decision must be made to hire them or not, on the spot, and candidates cannot be
recalled later.  The order of the interviews is (uniformly) random and so the
interviewer does not know when the top candidate will come in.

As an example, suppose the interviews have rank order $574239618$.  The
interviewer will be able to rank each initial segment of candidates relative to
each other, but will not know their rank overall out of $N$.  So the interviewer
will see
\[ 1, 12, 231, 3421, 45312, 453126, \ldots \]
and must decide when to stop and hire.  We count the game as a win if the best
candidate out of $N$ is hired and as a loss otherwise, with all losses having
equal value.  The optimal strategy, for $N$ sufficiently large, turns out to be
to reject the first $\frac{N}{e}$ of the candidates (about $37\%$) and then
hire the next candidate who is better than all earlier candidates.  

Now, suppose that a consulting firm (with some oracular powers) agrees to
filter candidates for the interviewer.  They offer two strategies.  In the
first strategy, they will guarantee that each time a candidate $B$ ranks lower
than some candidate $A$ already interviewed (``disappointing''), no future
candidates will rank lower than $B$.  In the second strategy, they guarantee
that each time a candidate $B$ ranks higher than some candidate $A$ already
interviewed (``raising the bar''), no future candidates will rank lower than
$A$.  All other aspects of the game remain the same.

Is there any difference between these?  Are they better or worse than the
classical case?  

\bigskip
\section{Refinement}

Interview rank orders are {\bf permutations} of some fixed size $N$ which we
write using the notation $p_1 p_2 \cdots p_N$, where the $p_i$ are the values
$1, 2, \cdots, N$ arranged in some order.  In this work, we restrict the
interview rank orders using pattern avoidance.  

\begin{definition}
We say that the permutation $p = p_1 p_2 \cdots p_N$ contains the pattern $q =
q_1 q_2 q_3$ if there exist $i < j < k$ such that $p_i, p_j, p_k$ are in the
same relative order as $q_1, q_2, q_3$.
\end{definition}

So, the ``disappointment-free'' consulting strategy is equivalent to requiring
the interview rank orders to be $321$-avoiding.  Similarly, the ``bar-raising''
situation is the same as $231$-avoiding.  See the textbook~\cite{bona} for a
gentle introduction to pattern-avoidance.  Putting aside the story about the
consultants, we believe that pattern avoidance is a natural mechanism for
modeling the effect of domain learning by the player during the game.  More
precisely, as the interviewer ranks the current candidates at each step, they
acquire information that allows them to hone the pool to include more relevant
candidates at future time steps.  We represent this honing process using
pattern avoidance.

The {\bf left-to-right maxima} in a permutation $p$ consist of elements $p_j$
that are larger in value than every element $p_i$ to the left (i.e. for $i<j$).
In the game of best choice, it is never optimal to select a candidate that is
not a left-to-right maximum.  A {\bf positional} strategy for the game of best
choice is one in which the interviewer transitions from rejection to hiring
based on the position of the interview.  More precisely, the interviewer may
play the {\bf $k$-positional} strategy on a permutation $p$ by rejecting
candidates $p_1, p_2, \ldots, p_k$ and then accepting the next left-to-right
maximum thereafter.  If $k$ is set too high, it is likely the player will miss
the best candidate.  If $k$ is set too low, they will probably not have set
their standards high enough to capture the best candidate.  We say that a
particular interview rank order is {\bf $k$-winnable} if transitioning from
rejection to hiring after the $k$th interview captures the best candidate.  For
example, $574239618$ is $k$-winnable for $k = 2, 3, 4$, and $5$.  It is
straightforward to verify that a permutation $p$ is $k$-winnable precisely when
$k$ lies between the last two left-to-right maxima in $p$.

In this paper, we restrict to using these positional strategies applied to a
permutation chosen uniformly at random among those avoiding $321$ (or,
alternatively, $231$) in order to facilitate comparison with the classical
case.  For each model, we seek to determine the optimal transition position $k$
and probability of winning, for finite $N$ and asymptotically as $N \rightarrow
\infty$.

We now mention some ties to recent work.  Several authors have investigated the
distribution of various permutation statistics for a random model in which a
pattern-avoiding permutation is chosen uniformly at random.  For example,
\cite{miner-pak} finds the positions of smallest and largest elements as well
as the number of fixed points in a random permutation avoiding a single pattern
of size 3; \cite{mp-312} finds the probability that one or two specified points
occur in a random permutation avoiding 312; and the work of several authors
\cite{dhw,fmw} determines the lengths of the longest monotone and alternating
subsequences in a random permutation avoiding a single pattern of size 3.  
We also consider uniformly random $321$-avoiding and $231$-avoiding
permutations in our work, but the statistics we are concerned with arise from
the game of best choice.  In some sense, our results refine the question of
where a uniformly random pattern-avoiding permutation achieves its maximum
because in our problem we want to transition so as to capture the maximum
value.  We also consider asymptotics for both of our models, so obtaining a
``limit-strategy,'' just as in the classical game.
  
In addition, Wilf has collected some results on distributions of left-to-right
maxima in \cite{wilf} and Prodinger \cite{prodinger} has studied these under a
geometric random model.  Although we phrase our results in terms of the game of
best choice, they may also be viewed as an extension of the literature on
distributions of left-to-right maxima to subsets of pattern-avoiding
permutations.

\bigskip
\section{Raising the bar}

An {\bf extension} of a permutation $p = p_1 p_2 \cdots p_{N-1}$ is the result
of inserting value $N$ into one of the $N$ positions before, between, or after
entries in $p$.

\begin{lemma}\label{l:bar}
Let $p$ be a $231$-permutation of size $N-1$ and $0 \leq k \leq N-1$.  Then
there exists a unique extension of $p$ that is $k$-winnable for $N$. 
\end{lemma}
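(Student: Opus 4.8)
The plan is to translate the two requirements on an extension $\hat p$ of $p$ --- that it be $k$-winnable for $N$, and that it remain $231$-avoiding (the extensions relevant to the surrounding discussion) --- into explicit conditions on the position into which $N$ is inserted, and then observe that these conditions single out exactly one such position.

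First I would set up the dictionary for $k$-winnability. If $\hat p$ is obtained by inserting $N$ in position $m$, write $\hat p = p_1 \cdots p_{m-1}\,N\,p_m \cdots p_{N-1}$. Since $N$ is the largest entry, the left-to-right maxima of $\hat p$ are those of $p$ lying among $p_1,\dots,p_{m-1}$ together with $N$ in position $m$; in particular $N$ is the last left-to-right maximum, and when $m \ge 2$ the one immediately preceding it sits in the position $\ell$ of $\max\{p_1,\dots,p_{m-1}\}$. By the characterization recalled above, $\hat p$ is then $k$-winnable for $N$ exactly when $\ell \le k \le m-1$; when $m = 1$, the value $N$ is the only left-to-right maximum, and $\hat p$ is $k$-winnable iff $k = 0$.

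Next I would record which insertions stay $231$-avoiding. Being maximal, $N$ can only play the role of the ``$3$'' in an occurrence of $231$, and no occurrence avoiding $N$ is created because deleting $N$ from $\hat p$ returns $p$; hence $\hat p$ avoids $231$ precisely when $p$ does and no entry to the left of position $m$ exceeds an entry to its right, i.e. when $\{p_1,\dots,p_{m-1}\} = \{1,\dots,m-1\}$. Writing the \emph{breakpoints} of $p$ (the positions $j$ with $\{p_1,\dots,p_j\}=\{1,\dots,j\}$) as $0 = t_0 < t_1 < \cdots < t_r = N-1$, the admissible insertion positions are exactly $m = t_i+1$ for $0 \le i \le r$.

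The only place the hypothesis is used is a structural fact I would isolate as a sublemma: \emph{a nonempty $231$-avoiding permutation having no breakpoint other than $t_0$ and $t_r$ carries its maximum in first position}. Indeed, if the maximum stood in a position $i \ge 2$, then avoiding $231$ forces every entry before position $i$ to be smaller than every entry after it, so $\{p_1,\dots,p_{i-1}\}=\{1,\dots,i-1\}$ and $i-1$ would be a breakpoint strictly between the extremes. Applying this to the block of $p$ lying strictly between consecutive breakpoints $t_{i-1}$ and $t_i$ shows its largest value $t_i$ occupies position $t_{i-1}+1$; thus for the admissible insertion $m = t_i+1$ we get $\ell = t_{i-1}+1$, and $\hat p$ is $k$-winnable iff $t_{i-1} < k \le t_i$. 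Since the insertion $m = t_0+1 = 1$ is $k$-winnable only for $k = 0$, and $\{0\},\,(t_0,t_1],\,(t_1,t_2],\,\dots,\,(t_{r-1},t_r]$ partition $\{0,1,\dots,N-1\}$, for each $k$ with $0 \le k \le N-1$ there is exactly one admissible insertion position, hence exactly one $231$-avoiding extension, that is $k$-winnable --- which is the assertion. I expect the sublemma to be the main obstacle: steps one and two and the final partition count are bookkeeping, but $231$-avoidance is genuinely needed there, since without restricting to $231$-avoiding extensions uniqueness can already fail (e.g. $p = 21$, $k = 1$).
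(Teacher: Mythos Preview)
Your proof is correct, and it takes a genuinely different route from the paper's.

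The paper argues by direct construction: given $k$, it locates $p_m=\max\{p_1,\dots,p_k\}$, finds the leftmost entry $p_w$ to the right of position $k$ exceeding $p_m$, inserts $N$ immediately before $p_w$ (or at the end if none exists), and then verifies separately that the resulting permutation is $k$-winnable, is $231$-avoiding, and is the only such extension. Your argument instead first classifies \emph{all} $231$-avoiding extensions as those inserting $N$ right after a breakpoint $t_i$, and then --- via the sublemma that an indecomposable $231$-avoiding block carries its maximum in first position --- shows the insertion after $t_i$ is $k$-winnable exactly for $k\in(t_{i-1},t_i]$; existence and uniqueness then drop out together from the fact that these intervals partition $\{0,1,\dots,N-1\}$. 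The paper's version is more explicitly constructive (you can read off where $N$ goes from $k$), while yours makes the block decomposition of $231$-avoiding permutations do the work, yielding existence and uniqueness in one stroke and handling the boundary case $k=0$ more cleanly than the paper's $\max\emptyset$. You are also right to flag that the lemma as stated omits the $231$-avoidance hypothesis on the extension, and that without it uniqueness fails; the paper's own proof silently imposes this restriction.
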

\begin{proof}
Fix $N$ and $k$.
Let $p_1 p_2 \cdots p_k | p_{k+1} \cdots p_{N-1}$ be a $231$-avoiding permutation of
size $N-1$, with $p_m = \max\{p_1, p_2, \ldots, p_k\}$.

Define $p_w$ to be the leftmost value greater than $p_m$ among $\{p_{k+1},
p_{k+2}, \ldots, p_{N-1}\}$, and let $q$ be the result of inserting $N$ into the
position directly prior to $p_w$ (or into the last position if $p_w$ does not
exist). So we have
\[ q = p_1 p_2 \cdots p_m \cdots p_k | p_{k+1} \cdots p_{w-1} N p_w \cdots
p_{N-1}. \]

We claim that $q$ is the unique $231$-avoiding $k$-winnable extension of
$p$.  To see this, observe that:
\begin{itemize}
    \item By construction, all elements of $\{p_{k+1}, \ldots, p_{w-1}\}$ are
        less than $p_m$, so $q$ is $k$-winnable.
    \item We began with a $231$-avoiding permutation $p$.  If $q$ contains
        $231$, the value $N$ must play the role of ``3.''  Therefore, it
        suffices to show that all of the values lying to the left of $N$ are
        less than all values lying to the right of $N$.  By construction, $p_m =
        \max\{p_1, p_2, \ldots, p_{w-1}\}$ and $p_m < p_w$.  If there exists
        some element $y < p_m$ among the entries $\{p_{w+1}, p_{w+2}, \cdots,
        p_{N-1}\}$ then $(p_m, p_w, y)$ forms a $231$-instance, contradicting
        that $p$ is $231$-avoiding.  Hence, no such $y$ exists and $q$ is
        $231$-avoiding.
    \item If the extension $q$ were not unique, we would have two positions
        $L_1$ and $L_2$, say, where $N$ could be inserted to the right of $p_k$
        to produce distinct $k$-winnable permutations of size $N$.  In
        particular, there must exist at least one element $p_v$ between $L_1$
        and $L_2$.  But the previous paragraph shows that we would require $p_m
        < p_v$ for the extension $q$ using $L_1$ to be $231$-avoiding, so the
        extension using $L_2$ is not $k$-winnable, a contradiction.  Hence, the
        extension is unique.
\end{itemize}
This completes the proof.
\end{proof}

It is well-known that the Catalan numbers $C_N = \frac{1}{N+1}{ {2N} \choose N}$
count the number of $231$-avoiding permutations of size $N$ (see e.g.
\cite{bona}).  Hence, we obtain the following result.  

\begin{corollary}
There are exactly $C_{N-1}$ permutations of size $N$ that are $231$-avoiding and
$k$-winnable.
\end{corollary}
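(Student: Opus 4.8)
The plan is to deduce the corollary from Lemma~\ref{l:bar} by exhibiting a bijection between the set of $231$-avoiding permutations of size $N-1$ and the set of $231$-avoiding permutations of size $N$ that are $k$-winnable (here $k$ is a fixed parameter with $0 \leq k \leq N-1$). Since the former set has cardinality $C_{N-1}$ by the Catalan enumeration of $231$-avoiders recalled just above, this immediately yields the claimed count.

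First I would define the forward map $\Phi$ sending a $231$-avoiding permutation $p$ of size $N-1$ to the unique $231$-avoiding $k$-winnable extension of $p$ produced by Lemma~\ref{l:bar}; this is well-defined precisely because the lemma guarantees both existence and uniqueness. For the inverse I would use the deletion map $\Psi$ that removes the entry equal to $N$ from a permutation $q$ of size $N$. The key elementary observation is that deleting an entry from a $231$-avoiding permutation again yields a $231$-avoiding permutation, since any $231$-instance in $\Psi(q)$ would already occur as a $231$-instance in $q$; hence $\Psi(q)$ is a $231$-avoiding permutation of size $N-1$ whenever $q$ is $231$-avoiding.

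Next I would verify that $\Phi$ and $\Psi$ are mutually inverse. The identity $\Psi \circ \Phi = \mathrm{id}$ is immediate, because $\Phi(p)$ is by definition obtained from $p$ by inserting the value $N$, so removing $N$ recovers $p$. For $\Phi \circ \Psi = \mathrm{id}$, let $q$ be a $231$-avoiding $k$-winnable permutation of size $N$ and set $p = \Psi(q)$; then $q$ is an extension of $p$ that is $231$-avoiding and $k$-winnable, so by the uniqueness clause of Lemma~\ref{l:bar} it must coincide with the distinguished extension $\Phi(p)$. Thus $\Phi$ is a bijection, and invoking the known fact $\#\{\,231\text{-avoiding permutations of size } N-1\,\} = C_{N-1}$ finishes the argument.

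The only point that requires care — more a caution than a genuine obstacle — is that it is the \emph{uniqueness} half of Lemma~\ref{l:bar} that forces $\Phi$ to be surjective; the existence half alone would give only injectivity, and the cardinality conclusion would not follow. Everything else is routine, since insertion of the maximum value and deletion of the maximum value are visibly inverse operations and both respect $231$-avoidance.
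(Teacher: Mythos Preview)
Your proof is correct and follows essentially the same approach as the paper: both deduce the count from Lemma~\ref{l:bar} by asserting that the extension map gives a bijection between $231$-avoiding permutations of size $N-1$ and $231$-avoiding $k$-winnable permutations of size $N$. The paper's proof is a one-line invocation of this bijection, whereas you have helpfully spelled out the inverse (deletion of $N$) and explicitly noted that the uniqueness clause of the lemma is what furnishes surjectivity.
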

\begin{proof}
For fixed $k$, the set of $231$-avoiding permutations of size $N-1$ are in
bijection with the set of $231$-avoiding $k$-winnable permutations of size $N$
by Lemma~\ref{l:bar}.
\end{proof}

Notice the curious consequence that {\em it does not matter which positional
strategy we use}:  for fixed $N$, the probability of selecting the best
candidate is the same for all $k$.  From the explicit formula, it is
straightforward to work out the asymptotic probability of success $\lim_{N
\rightarrow \infty} \frac{C_{N-1}}{C_N} = \frac{1}{4}$.

\bigskip
\section{Avoiding disappointment}

Next, we consider positional strategies for the $321$-avoiding interview rank
orders.  Recall that a permutation is $k$-winnable if and only if $k$ lies
between its last two left-to-right maxima.  Hence, we study the distribution of
left-to-right maxima in $321$-avoiding permutations.  For this, we make use of
{\bf Dyck paths}.  These may be viewed as paths in the Cartesian plane from
$(0,0)$ to $(N,N)$, consisting of $(0,1)$ steps (i.e.  north) and $(1,0)$ steps
(i.e. east), staying above the line $y = x$.  The {\bf northeast corners} in a
Dyck path consist of a north step immediately followed by an east step.  We
label each northeast corner by the column and height at the end of its east
step.

\begin{example}
The Dyck paths for $N = 3$ are shown below.
\[	
    \scalebox{0.4}{ \begin{tikzpicture}
        \foreach \i in {0,...,3} { \draw [dashed] (\i,\i) -- (\i,3); }
        \foreach \i in {0,...,3} { \draw [dashed] (0,\i) -- (\i,\i); }
        \draw [dashed] (0,0) -- (3,3);
		\draw [line width=2pt] (0,0) node [circle, fill=black, inner sep=0pt, minimum size=5pt, draw] {} 
		-- ++(0,1) node [circle, fill=black, inner sep=0pt, minimum size=5pt, draw] {}  
		-- ++(0,1) node [circle, fill=black, inner sep=0pt, minimum size=5pt, draw] {}  
		-- ++(0,1) node [circle, fill=black, inner sep=0pt, minimum size=5pt, draw] {}  
		-- ++(1,0) node [circle, fill=black, inner sep=0pt, minimum size=5pt, draw] {}  
		-- ++(1,0) node [circle, fill=black, inner sep=0pt, minimum size=5pt, draw] {}  
		-- ++(1,0) node [circle, fill=black, inner sep=0pt, minimum size=5pt, draw] {}  
		;
		\end{tikzpicture} } 
    \scalebox{0.4}{ \begin{tikzpicture}
        \foreach \i in {0,...,3} { \draw [dashed] (\i,\i) -- (\i,3); }
        \foreach \i in {0,...,3} { \draw [dashed] (0,\i) -- (\i,\i); }
        \draw [dashed] (0,0) -- (3,3);
		\draw [line width=2pt] (0,0) node [circle, fill=black, inner sep=0pt, minimum size=5pt, draw] {} 
		-- ++(0,1) node [circle, fill=black, inner sep=0pt, minimum size=5pt, draw] {}  
		-- ++(0,1) node [circle, fill=black, inner sep=0pt, minimum size=5pt, draw] {}  
		-- ++(1,0) node [circle, fill=black, inner sep=0pt, minimum size=5pt, draw] {}  
		-- ++(0,1) node [circle, fill=black, inner sep=0pt, minimum size=5pt, draw] {}  
		-- ++(1,0) node [circle, fill=black, inner sep=0pt, minimum size=5pt, draw] {}  
		-- ++(1,0) node [circle, fill=black, inner sep=0pt, minimum size=5pt, draw] {}  
		;
		\end{tikzpicture} } 
    \scalebox{0.4}{ \begin{tikzpicture}
        \foreach \i in {0,...,3} { \draw [dashed] (\i,\i) -- (\i,3); }
        \foreach \i in {0,...,3} { \draw [dashed] (0,\i) -- (\i,\i); }
        \draw [dashed] (0,0) -- (3,3);
		\draw [line width=2pt] (0,0) node [circle, fill=black, inner sep=0pt, minimum size=5pt, draw] {} 
		-- ++(0,1) node [circle, fill=black, inner sep=0pt, minimum size=5pt, draw] {}  
		-- ++(0,1) node [circle, fill=black, inner sep=0pt, minimum size=5pt, draw] {}  
		-- ++(1,0) node [circle, fill=black, inner sep=0pt, minimum size=5pt, draw] {}  
		-- ++(1,0) node [circle, fill=black, inner sep=0pt, minimum size=5pt, draw] {}  
		-- ++(0,1) node [circle, fill=black, inner sep=0pt, minimum size=5pt, draw] {}  
		-- ++(1,0) node [circle, fill=black, inner sep=0pt, minimum size=5pt, draw] {}  
		;
		\end{tikzpicture} } 
    \scalebox{0.4}{ \begin{tikzpicture}
        \foreach \i in {0,...,3} { \draw [dashed] (\i,\i) -- (\i,3); }
        \foreach \i in {0,...,3} { \draw [dashed] (0,\i) -- (\i,\i); }
        \draw [dashed] (0,0) -- (3,3);
		\draw [line width=2pt] (0,0) node [circle, fill=black, inner sep=0pt, minimum size=5pt, draw] {} 
		-- ++(0,1) node [circle, fill=black, inner sep=0pt, minimum size=5pt, draw] {}  
		-- ++(1,0) node [circle, fill=black, inner sep=0pt, minimum size=5pt, draw] {}  
		-- ++(0,1) node [circle, fill=black, inner sep=0pt, minimum size=5pt, draw] {}  
		-- ++(0,1) node [circle, fill=black, inner sep=0pt, minimum size=5pt, draw] {}  
		-- ++(1,0) node [circle, fill=black, inner sep=0pt, minimum size=5pt, draw] {}  
		-- ++(1,0) node [circle, fill=black, inner sep=0pt, minimum size=5pt, draw] {}  
		;
		\end{tikzpicture} } 
    \scalebox{0.4}{ \begin{tikzpicture}
        \foreach \i in {0,...,3} { \draw [dashed] (\i,\i) -- (\i,3); }
        \foreach \i in {0,...,3} { \draw [dashed] (0,\i) -- (\i,\i); }
        \draw [dashed] (0,0) -- (3,3);
		\draw [line width=2pt] (0,0) node [circle, fill=black, inner sep=0pt, minimum size=5pt, draw] {} 
		-- ++(0,1) node [circle, fill=black, inner sep=0pt, minimum size=5pt, draw] {}  
		-- ++(1,0) node [circle, fill=black, inner sep=0pt, minimum size=5pt, draw] {}  
		-- ++(0,1) node [circle, fill=black, inner sep=0pt, minimum size=5pt, draw] {}  
		-- ++(1,0) node [circle, fill=black, inner sep=0pt, minimum size=5pt, draw] {}  
		-- ++(0,1) node [circle, fill=black, inner sep=0pt, minimum size=5pt, draw] {}  
		-- ++(1,0) node [circle, fill=black, inner sep=0pt, minimum size=5pt, draw] {}  
		;
		\end{tikzpicture} } 
    \]
Their sets of northeast corners are 
\[ \{ (1,3) \}, \hspace{0.1in} 
 \{ (1,2), (2,3) \}, \hspace{0.1in}
 \{ (1,2), (3,3) \}, \hspace{0.1in}
 \{ (1,1), (2,3) \}, \hspace{0.1in}
 \{ (1,1), (2,2), (3,3) \} \]
 respectively.
\end{example}

\begin{lemma}
The possible sets $\{p_{i_1}, p_{i_2}, \ldots, p_{i_m}\}$ of values and
positions of left-to-right maxima arising from the various permutations of $N$
are in bijection with the sets of northeast corners 
\[ \{(i_j, p_{i_j}) : j = 1, \ldots, m\} \] 
of Dyck paths of size $N$.
\end{lemma}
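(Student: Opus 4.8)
The plan is to exhibit an explicit correspondence between a set $S = \{p_{i_1}, \ldots, p_{i_m}\}$ (with $i_1 < i_2 < \cdots < i_m$) that can arise as the complete set of positions and values of left-to-right maxima of some permutation of $N$, and the set of northeast corners of a Dyck path of size $N$. First I would characterize the data combinatorially on the permutation side. If $p_{i_j}$ are the left-to-right maxima read left to right, then the positions are strictly increasing and the values are strictly increasing; moreover the last maximum must have value $N$ (the largest value is always a left-to-right maximum) and the first must be $p_{i_1} = p_1$ with $i_1 = 1$ is not forced, but in fact $i_1 = 1$ always since $p_1$ is trivially a left-to-right maximum. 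The essential constraint is the ``staircase'' inequality: for each $j$, there are exactly $i_j$ entries in positions $1, \ldots, i_j$, and all of them are at most $p_{i_j}$ (since $p_{i_j}$ dominates everything to its left), while the values $p_{i_1}, \ldots, p_{i_{j-1}}$ together with $p_{i_j}$ already occupy $j$ of the slots $\leq p_{i_j}$; one needs $p_{i_j} \geq i_j$ so that positions $1,\ldots,i_j$ can be filled by distinct values $\leq p_{i_j}$. I would prove that a set of pairs $\{(i_j, p_{i_j})\}$ with $i_1 < \cdots < i_m$, $p_{i_1} < \cdots < p_{i_m} = N$, $i_m \le N$, and $i_j \leq p_{i_j}$ for all $j$ is realizable as the left-to-right maxima data of some permutation, by a direct greedy construction: place the maxima in their prescribed positions, then fill the remaining positions with the remaining values in decreasing order scanning left to right (or any order that avoids creating a new maximum), checking that no position to the right of $i_j$ and left of $i_{j+1}$ receives a value exceeding $p_{i_j}$, which is possible precisely because $i_{j+1} - 1 \leq p_{i_{j+1}} - 1$ leaves enough small values available.

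Next I would set up the Dyck path side. A Dyck path from $(0,0)$ to $(N,N)$ staying weakly above $y=x$ is determined by its northeast corners: a north step immediately followed by an east step, labeled by the column $c$ (the $x$-coordinate after the east step) and height $h$ (the $y$-coordinate, unchanged by the east step). The set of corners $\{(c_1, h_1), \ldots, (c_m, h_m)\}$ with $c_1 < \cdots < c_m$ satisfies $h_1 < \cdots < h_m$ (the path is monotone), $h_m = N$ and $c_m \le N$ actually $c_m = N$ is not forced but the path must end at $(N,N)$ so the last corner has the path's final east run; more precisely one checks a set of pairs is the corner set of a Dyck path iff $c_1 < \cdots < c_m$, $h_1 < \cdots < h_m = N$, $c_j \leq h_j$ for all $j$ (staying above the diagonal at the moment of the corner), and $h_{j-1} < c_j$ is automatic, plus $c_m = N$. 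Reconciling this with the permutation-side constraints will require matching $c_m = N$ with $i_m = N$; but note $i_m$ need not equal $N$ for a permutation (the top candidate need not be last). The fix: I would instead label corners by column $c$ and height $h$ and allow the path to continue east along the top after its last corner, so that the last corner has $h_m = N$ but $c_m \leq N$, which matches $p_{i_m} = N$ with $i_m \leq N$. Under the map $(i_j, p_{i_j}) \mapsto (i_j, p_{i_j})$ — the identity on pairs — the two lists of constraints coincide, giving the bijection.

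The main obstacle I anticipate is pinning down the exact inequality that characterizes which sets of pairs occur, on both sides, and checking they literally agree — in particular handling the boundary/last-corner convention so that $i_m \leq N$ with value $N$ corresponds correctly to a Dyck path that may run east along its top edge without an intervening corner. Once that bookkeeping is settled, the bijection is the identity map on the labeled sets, and both directions (realizability of the permutation data via the greedy fill, and realizability of the Dyck path via reading off the corners) are straightforward inductions on $m$ or on $N$. I would organize the write-up as: (1) a lemma stating the inequality characterization for permutation left-to-right maxima data, proved by the greedy construction; (2) the analogous characterization for Dyck path corner sets, proved by induction peeling off the last corner; (3) observing the two characterizations are identical, hence the identity map is the desired bijection.
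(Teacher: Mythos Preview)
Your overall architecture matches the paper's: characterize the admissible data on each side by a list of inequalities, then observe that the two characterizations coincide so that the identity map on pairs is the bijection. The paper carries this out by phrasing the constraint locally, as ``after the left-to-right maximum (respectively, northeast corner) $p_{i_j}$ there are at most $p_{i_j}-i_j$ entries (respectively, east steps) before the next one,'' and then exhibits the canonical $321$-avoiding filling.

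However, your stated characterization is wrong, and this is a genuine gap rather than a bookkeeping issue. You claim that a set of pairs with $i_1<\cdots<i_m$, $p_{i_1}<\cdots<p_{i_m}=N$, and $i_j\le p_{i_j}$ for all $j$ is realizable as the left-to-right maxima of some permutation, and analogously that $c_j\le h_j$ suffices for a Dyck corner set. This is false on both sides. Take $N=4$ with purported maxima $(i_1,p_{i_1})=(1,2)$ and $(i_2,p_{i_2})=(4,4)$: your inequalities hold, but positions $2$ and $3$ would each need a value strictly below $2$, and only the value $1$ is available. Correspondingly, the path with corners $(1,2)$ and $(4,4)$ would pass through $(3,2)$, dipping below the diagonal. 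The condition $c_j\le h_j$ only controls the path \emph{at} each corner, not between corners; the missing inequality is $i_{j+1}-1\le p_{i_j}$ (equivalently $c_{j+1}\le h_j+1$), which is exactly the paper's ``at most $p_{i_j}-i_j$ steps until the next corner.''

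Your greedy-fill justification contains the precise slip: you write that the fill succeeds ``because $i_{j+1}-1\le p_{i_{j+1}}-1$ leaves enough small values available,'' but the values placed between positions $i_j$ and $i_{j+1}$ must be below $p_{i_j}$, not $p_{i_{j+1}}$. Replace $p_{i_{j+1}}$ by $p_{i_j}$ in that inequality and you recover the correct constraint; with that fix, your three-lemma outline goes through and is essentially the paper's argument. (Your side remark ``$h_{j-1}<c_j$ is automatic'' should also be discarded; the relevant relation is $c_j\le h_{j-1}+1$, and it is the content, not a triviality.)
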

\begin{proof}
The defining property for a Dyck path is that at each step along the path, the
number of east steps taken so far is less than or equal to the number of
north steps taken so far.  Equivalently, we may consider paths whose northeast
corners satisfy the following two conditions:
\begin{itemize}
    \item There is always a northeast corner in the first column, and 
    \item Whenever we add a northeast corner corresponding to $p_{i_j}$, we take
        at most $p_{i_j} - i_j$ east steps until we reach the next column with a
        northeast corner.
\end{itemize}

But this is precisely equivalent to the conditions that define sets of
left-to-right maxima in a permutation:
\begin{itemize}
    \item The first position is always a left-to-right maximum, and 
    \item Whenever we add a left-to-right maximum corresponding to $p_{i_j}$, we
        have (by definition) at most $p_{i_j} - i_j$ complementary values that
        are smaller than $p_{i_j}$ and have not yet been used.  Hence, there are
        at most $p_{i_j} - i_j$ entries until we reach the next left-to-right
        maximum.
\end{itemize}

Given a Dyck path representing a set of left-to-right maxima, we can produce a
canonical permutation $p$ that realizes this set of left-to-right maxima as
follows:  Place each $p_{i_j}$ into position $i_j$ and then fill the
complementary positions with the complementary values $\{1, 2, \ldots, N\}
\setminus \{p_{i_1}, \ldots, p_{i_m}\}$ arranged increasingly.  In terms of the
Dyck path, we can label the northeast corners by the value of their
corresponding left-to-right maximum, and then label the remaining horizontal
edges with the complementary values, arranged increasingly as we read north and
east along the path.  Thus, the label for column $i$ of the Dyck path gives the
value for the $i$th position of the permutation.  
\end{proof}

As an example in $N = 8$, if $p_1 = 4, p_3 = 7$, and $p_5 = 8$ are the
$p_{i_j}$, we obtain $p = 41728356$; this is illustrated in
Figure~\ref{f:lrmex}.

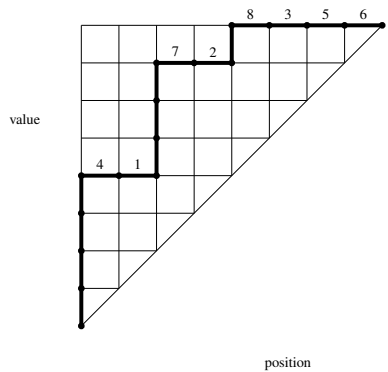
\begin{figure}[ht]
		\[ \scalebox{0.5}{ \begin{tikzpicture}
			\foreach \i in {0,...,8} { \draw (\i,\i) -- (\i,8); }
			\foreach \i in {0,...,8} { \draw (0,\i) -- (\i,\i); }
			\draw (0,0) -- (8,8);
			\draw [line width=3pt] (0,0) node [circle, fill=black, inner sep=0pt, minimum size=2pt, draw] {} 
			-- ++(0,1) node [circle, fill=black, inner sep=0pt, minimum size=2pt, draw] {}  
			-- ++(0,1) node [circle, fill=black, inner sep=0pt, minimum size=2pt, draw] {}  
			-- ++(0,1) node [circle, fill=black, inner sep=0pt, minimum size=2pt, draw] {}  
			-- ++(0,1) node [circle, fill=black, inner sep=0pt, minimum size=2pt, draw] {}  
			-- ++(1,0) node [circle, fill=black, inner sep=0pt, minimum size=2pt, draw] {}  
			-- ++(1,0) node [circle, fill=black, inner sep=0pt, minimum size=2pt, draw] {}  
			-- ++(0,1) node [circle, fill=black, inner sep=0pt, minimum size=2pt, draw] {}  
			-- ++(0,1) node [circle, fill=black, inner sep=0pt, minimum size=2pt, draw] {}  
			-- ++(0,1) node [circle, fill=black, inner sep=0pt, minimum size=2pt, draw] {}  
			-- ++(1,0) node [circle, fill=black, inner sep=0pt, minimum size=2pt, draw] {}  
			-- ++(1,0) node [circle, fill=black, inner sep=0pt, minimum size=2pt, draw] {}  
			-- ++(0,1) node [circle, fill=black, inner sep=0pt, minimum size=2pt, draw] {}  
			-- ++(1,0) node [circle, fill=black, inner sep=0pt, minimum size=2pt, draw] {}  
			-- ++(1,0) node [circle, fill=black, inner sep=0pt, minimum size=2pt, draw] {}  
			-- ++(1,0) node [circle, fill=black, inner sep=0pt, minimum size=2pt, draw] {}  
			-- ++(1,0) node [circle, fill=black, inner sep=0pt, minimum size=2pt, draw] {}  
			;
			\draw (5.5,-1) node {position};
			\draw (-1.5,5.5) node {value};
			\draw (0.5,4.3) node {4};
			\draw (1.5,4.3) node {1};
			\draw (2.5,7.3) node {7};
			\draw (3.5,7.3) node {2};
			\draw (4.5,8.3) node {8};
			\draw (5.5,8.3) node {3};
			\draw (6.5,8.3) node {5};
			\draw (7.5,8.3) node {6};
			\end{tikzpicture} }\]
\caption{Completing the set of left-to-right maxima $\{p_1 = 4, p_3 = 7, p_5 =
8\}$}\label{f:lrmex}
\end{figure}

Recall that the Catalan numbers $C_N$ count $321$-avoiding permutations of size
$N$, and also count the number of Dyck paths of size $N$ (see e.g. \cite{bona}).
Hence, we obtain the following result.

\begin{corollary}\label{c:path}
A $321$-avoiding permutation $p$ of size $N$ is uniquely determined by the
values and positions of its left-to-right maxima.  
\end{corollary}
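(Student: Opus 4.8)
The plan is to combine the bijection established in the preceding Lemma (between sets of values-and-positions of left-to-right maxima and sets of northeast corners of Dyck paths) with a short structural argument showing that, among all permutations sharing a prescribed set of left-to-right maxima, exactly one avoids $321$ --- namely the canonical completion constructed in the proof of that Lemma. Since the number of Dyck paths of size $N$ and the number of $321$-avoiding permutations of size $N$ are both $C_N$, establishing this uniqueness will simultaneously show that $p \mapsto \{(i_j, p_{i_j})\}$ is a bijection from $321$-avoiding permutations onto all possible left-to-right maxima sets.

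First I would recall that an arbitrary permutation $p$ of size $N$ determines the set $\{(i_j, p_{i_j}) : j = 1, \ldots, m\}$ of positions and values of its left-to-right maxima, and that by the Lemma this set arises as the northeast corners of a Dyck path, with the canonical completion (place each $p_{i_j}$ in position $i_j$ and fill the remaining positions with the unused values in increasing order) being one permutation realizing it. Thus it suffices to prove: (a) the canonical completion is $321$-avoiding, and (b) it is the \emph{only} $321$-avoiding permutation with that set of left-to-right maxima. For (a), note that the left-to-right maxima of any permutation are strictly increasing in value when read left to right, while in the canonical completion the non-maximal entries are increasing by construction; hence $p$ is a shuffle of two increasing subsequences and so contains no strictly decreasing subsequence of length $3$, i.e. avoids $321$.

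For (b), suppose $p$ is any $321$-avoiding permutation whose left-to-right maxima occupy the prescribed positions with the prescribed values. I claim the non-maximal entries of $p$, read in order of position, are increasing. Indeed, if positions $a < b$ both carry non-maximal values with $p_a > p_b$, then because $p_a$ is not a left-to-right maximum there is an earlier position $c < a$ with $p_c > p_a$; then $(p_c, p_a, p_b)$ is an occurrence of $321$, a contradiction. Hence the non-maximal values must appear in the non-maximal positions in increasing order, so $p$ coincides with the canonical completion. This gives uniqueness, and the Catalan count then upgrades the statement to a bijection.

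The main obstacle here is minor: there is no delicate estimate, only the forcing step in (b), which rests entirely on the elementary observation that any non-maximal entry of a permutation has a strictly larger entry somewhere to its left --- this is precisely what rules out any non-increasing arrangement of the non-maximal entries in a $321$-avoiding permutation. Everything else is bookkeeping against the Lemma already proved.
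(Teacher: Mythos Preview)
Your proof is correct and shares the key observation with the paper's argument: the canonical completion is a shuffle of two increasing sequences and therefore $321$-avoiding. Where you diverge is in establishing uniqueness. The paper argues by counting: the canonical completions give $C_N$ distinct $321$-avoiding permutations, and since there are only $C_N$ such permutations in total, every $321$-avoiding permutation must arise this way and hence be determined by its left-to-right maxima. You instead prove uniqueness directly via your step~(b), showing that in any $321$-avoiding permutation the non-maximal entries are forced to appear in increasing order. Your route is more self-contained---it proves the corollary as stated without invoking the Catalan enumeration of $321$-avoiding permutations---while the paper's pigeonhole argument is shorter but leans on that external fact. Either way the bijection with Dyck paths follows, and the subsequent development in the paper is unaffected.
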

\begin{proof}
The construction in the previous proof produces $C_N$ distinct permutations of
size $N$ that have the structure of two increasing sequences shuffled together
(namely, the sequence of left-to-right maxima, and the sequence of
complementary values).  Hence, the permutations constructed from Dyck paths in
the previous result are all $321$-avoiding.  Since there are Catalan many of
each, there must be exactly one $321$-avoiding permutation for each Dyck path.
\end{proof}

\begin{figure}[ht]
	\[ \scalebox{0.65}{ \begin{tikzpicture}
		\foreach \i in {0,...,8} { \draw (\i,\i) -- (\i,8); }
		\foreach \i in {0,...,8} { \draw (0,\i) -- (\i,\i); }
		\draw (0,0) -- (2,2);
		\draw [dashed] (2,2) -- (3,3);
		\draw (3,3) -- (8,8);
		\draw [line width=2pt] (3,4) node [circle, fill=black, inner sep=0pt,
        minimum size=3pt, draw] {} -- (4,4) node [circle, fill=black, inner sep=0pt, minimum size=3pt, draw] {};
		\draw [line width=2pt] (3,5) node [circle, fill=black, inner sep=0pt,
        minimum size=3pt, draw] {} -- (4,5) node [circle, fill=black, inner sep=0pt, minimum size=3pt, draw] {};
		\draw [line width=2pt] (3,6) node [circle, fill=black, inner sep=0pt,
        minimum size=3pt, draw] {} -- (4,6) node [circle, fill=black, inner sep=0pt, minimum size=3pt, draw] {};
		\draw [line width=2pt,color=blue] (3,7) node [circle, fill=black, inner sep=0pt, minimum size=3pt, draw] {} -- (4,7) node [circle, fill=black, inner sep=0pt, minimum size=3pt, draw] {};
		\draw (-0.2,-0.2) node {$(0,0)$};
		\draw (8.2,8.2) node {$(N,N)$};
		\draw (3.5,8.2) node {${ }^{k=N-4}$};
        \draw (3.25,7.23) node {$T_4$};
        \draw (4.25,7.23) node {$T_3$};
        \draw (5.25,7.23) node {$T_2$};
        \draw (6.25,7.23) node {$T_1$};
        \draw (5.38,6.23) node {$\Delta T_1$};
        \draw (4.38,6.23) node {$\Delta T_2$};
        \draw (3.38,6.23) node {$\Delta S_3$};
		\end{tikzpicture} } \]
        \caption{Schematic for path recurrences}\label{f:sch}
\end{figure}
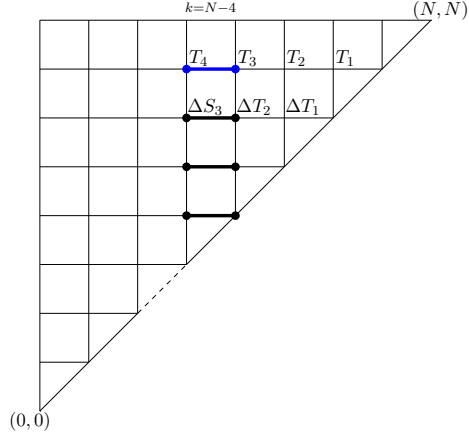

\begin{definition}
For $1 \leq i \leq N-1$ define $T_i(N)$ to be the total number of partial Dyck
paths from $(0,0)$ to $(N-1-i,N-1)$, and define $S_i(N)$ to be the number of
Dyck paths from $(0,0)$ to $(N,N)$ where column $N-i$ lies weakly right of the
next-to-last northeast corner and strictly left of the last northeast corner in
the path.
\end{definition}

By Corollary~\ref{c:path}, the $S_i(N)$ are the number of $(N-i)$-winnable
permutations of $N$.  For example, the path in Figure~\ref{f:lrmex} would be
counted in $S_i(N)$ for $N-i \in \{3, 4\}$ because the last two northeast
corners occur in columns $3$ and $5$, respectively.  Some initial values are
given in Figure~\ref{f:nums}.  If we divide by the $N$th Catalan number we
obtain the probability of success for the corresponding $(N-i)$-positional
strategy.  These are illustrated in Figure~\ref{f:pcts}.  It turns out that the
$T_i(N)$ are Catalan triangle entries at $(N-1, i)$, namely $T_i(N) =
\frac{i+1}{N}{ {2(N-1)-i} \choose {N-1}}$, but we do not use this in our
development.

Now, define an operation $\Delta$ that acts on a function of $N$ by replacing
$N$ with $N-1$.  That is, $\Delta f(N) = f(N-1)$.  We prefer to use this
operator, with the argument $N$ suppressed, as a notational convenience
for our formulas and figures (although all of our results can be obtained
without it).  We next prove recurrences for the $S_i$ and $T_i$ that will
facilitate their computation.

\begin{theorem}\label{t:rec}
We have \[ T_{i} = T_{i-1} - \Delta T_{i-2} \]
with $T_1 = C_{N-1}$ and $T_2 = C_{N-1} - C_{N-2}$ and
\[ S_{i} = i\ T_{i} + \Delta S_{i-1} \]
with $S_1 = C_{N-1}$.
\end{theorem}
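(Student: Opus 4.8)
The plan is to derive both recurrences by cutting (partial) Dyck paths at a distinguished step or corner, together with one elementary telescoping identity for path counts. Write $r(a,b)$ for the number of partial Dyck paths from $(0,0)$ to $(a,b)$ (lattice paths with unit north and east steps staying weakly above $y=x$); thus $r(a,b)=0$ unless $0\le a\le b$, and $r(0,b)=1$. By definition $T_i=r(N-1-i,N-1)$, and $S_i$ is the number of Dyck paths of size $N$ whose last northeast corner lies in a column $c$ with $N-i<c\le N$ and whose next-to-last northeast corner lies in a column $c'\le N-i$; equivalently, those having exactly one northeast corner in columns to the right of $N-i$ and at least one to the left. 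I will use two facts freely: (i) appending east steps is a bijection from partial Dyck paths ending at $(a,b)$ onto Dyck paths of size $b$ whose last $b-a$ steps are east, and deleting (or inserting) a single northeast corner changes the size of a Dyck path by one; and (ii) conditioning on the last step gives $r(a,b)=r(a-1,b)+r(a,b-1)$, which telescopes to $\sum_{x=0}^{m}r(x,y)=r(m,y+1)$ whenever $y\ge m$.

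For the $T_i$ recurrence I would take a partial Dyck path from $(0,0)$ to $(N-i,N-1)$ --- there are $T_{i-1}$ of these --- and condition on its last step. Deleting a final east step leaves a partial Dyck path to $(N-1-i,N-1)$, of which there are $T_i$; deleting a final north step leaves one to $(N-i,N-2)$, which upon reading the endpoint against the definition (with $N$ replaced by $N-1$) is counted by $T_{i-2}(N-1)=\Delta T_{i-2}$. This gives $T_{i-1}=T_i+\Delta T_{i-2}$. For the base cases, fact (i) identifies $T_1=r(N-2,N-1)$ with all Dyck paths of size $N-1$, giving $C_{N-1}$, and identifies $T_2=r(N-3,N-1)$ with the Dyck paths of size $N-1$ that end in two east steps; those are all of them except the ones ending in a northeast corner, and deleting that corner puts the exceptions in bijection with Dyck paths of size $N-2$, giving $C_{N-1}-C_{N-2}$.

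The substantial part is the $S_i$ recurrence, for which I would build a bijection recording the last two northeast corners (compare the schematic of Figure~\ref{f:sch}). Given $P$ counted by $S_i$, its last corner lies in some column $c$ and necessarily at height $N$, since after the last corner the path uses only east steps; its next-to-last corner lies in some column $c'$ at some height $h'$. As there is no corner between these two, $P$ runs east-then-north from $(c',h')$ to $(c-1,N-1)$, and the part of $P$ preceding the north step of the next-to-last corner is an arbitrary partial Dyck path $R$ ending at $(c'-1,h'-1)$. Conversely, every quadruple $(c,c',h',R)$ with $N-i<c\le N$, $1\le c'\le N-i$, $c-1\le h'\le N-1$, and $R$ a partial Dyck path to $(c'-1,h'-1)$ reassembles to a path in $S_i$, and this correspondence is a bijection. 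Summing the weight $r(c'-1,h'-1)$ over all quadruples --- first over $c'$ using identity (ii), then interchanging the $c$- and $h'$-summations --- collapses the count to
\[ S_i \;=\; \sum_{t=0}^{i-1}(t+1)\,r\big(N-i-1,\;N-i+t\big). \]
The $t=i-1$ term equals $i\cdot r(N-1-i,N-1)=i\,T_i$; the remaining sum is the same formula read with $i-1$ in place of $i$ and $N-1$ in place of $N$, hence equals $\Delta S_{i-1}$; and the $i=1$ instance of the displayed formula recovers $S_1=r(N-2,N-1)=C_{N-1}$.

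The step I expect to be the genuine obstacle is checking that the quadruple correspondence is a bijection rather than merely a surjection. The delicate point is bookkeeping with the heights and with northeast corners straddling the seams of the decomposition: one must verify that gluing $R$, the forced east-then-north run, and the two corners creates no \emph{extra} northeast corner in columns to the right of $N-i$ (which would violate the defining property of $S_i$), and that the inequalities $c-1\le h'\le N-1$ are exactly those forced by staying weakly above the diagonal. Once that bookkeeping is nailed down, the telescoping summation and the index shift identifying the tail with $\Delta S_{i-1}$ are routine.
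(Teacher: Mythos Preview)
Your proof is correct. For the $T_i$ recurrence and its base cases you argue exactly as the paper does: condition on whether the last step of a partial path to $(N-i,N-1)$ is east or north.

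For the $S_i$ recurrence the two proofs share the same combinatorial kernel---splitting according to the height $h'$ of the next-to-last northeast corner---but the execution differs. The paper argues directly and bijectively: paths with $h'=N-1$ are obtained by extending the $T_i$ partial paths to $(N-1-i,N-1)$ in $i$ ways (one for each admissible last-corner column), while paths with $h'<N-1$ are put in bijection with $S_{i-1}(N-1)$ by deleting one north step and one east step at the last corner. You instead unroll the full decomposition into quadruples $(c,c',h',R)$, telescope the $c'$-sum to reach the closed form $S_i=\sum_{t=0}^{i-1}(t+1)\,r(N-i-1,N-i+t)$, and then read off the recurrence by peeling away the $t=i-1$ summand and reindexing the tail. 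Your route is more computational but yields that explicit summation for $S_i$ as a byproduct; the paper's route is shorter and keeps the bijection visible throughout. The ``genuine obstacle'' you flag---verifying that gluing creates no spurious corner to the right of column $N-i$---is real but routine: since $c'\le N-i<c$ there is at least one east step between the two inserted corners, every corner of $R$ sits in a column $\le c'-1<N-i$, and the inequality $c-1\le h'$ is exactly the Dyck constraint at the lowest point of the east run, so the correspondence is indeed a bijection.
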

\begin{proof}
See Figure~\ref{f:sch} for a schematic illustrating these recurrences.

The recurrence for $T$ follows because each path counted by $T_{i-1}(N)$ must have
ended with a vertical step or a horizontal step; these are counted by $\Delta
T_{i-2}(N) = T_{i-2}(N-1)$ and $T_i(N)$, respectively.

The recurrence for $S$ follows because each path counted by $S_{i}(N)$ passes
through column $N-i$ at level $N-1$ or passes through column $N-i$ below level
$N-1$.  The first set of paths is counted by $i T_i(N)$ because any path ending
at $(N-1-i,N-1)$ can be extended in $i$ ways depending on which of the columns
$N-i, N-i+2, \ldots, N-1$ is used for the last vertical step.  The second set
of paths is counted by $\Delta S_{i-1}(N) = S_{i-1}(N-1)$ because we can
bijectively extend any path passing the required column and ending at
$(N-1,N-1)$ to end at $(N,N)$ instead by inserting one more pair of
vertical/horizontal steps at the last northeast corner.
\end{proof}

Using this theorem, we may write each $S_i$ and $T_i$ as a linear combination of
Catalan numbers.  On the one hand, applying $\Delta$ to $S_i$, say, simply
restricts the Dyck paths we are counting to end at $(N-1,N-1)$ instead of
$(N,N)$.  Algebraically, applying $\Delta$ replaces each Catalan number in the
linear combination with the previous Catalan number.

\begin{example}\label{e:forms}
Applying the recurrences from Theorem~\ref{t:rec}, we have
\[ T_3 = (C_{n-1} - C_{n-2}) - \Delta(C_{n-1}) = C_{n-1} - 2 C_{n-2} \]
\[ T_4 = ( C_{n-1} - 2 C_{n-2} ) - \Delta(C_{n-1}-C_{n-2}) = C_{n-1} - 3 C_{n-2} + C_{n-3} \]
\[ T_5 = ( C_{n-1} - 3 C_{n-2} + C_{n-3} ) - \Delta(C_{n-1} - 2 C_{n-2} ) = C_{n-1} - 4 C_{n-2} + 3 C_{n-3} \]
and
\[ S_2 = 2 (C_{n-1} - C_{n-2}) + \Delta(C_{n-1}) = 2 C_{n-1} - C_{n-2} \]
\[ S_3 = 3 (C_{n-1} - 2 C_{n-2}) + \Delta( 2 C_{n-1} - C_{n-2} ) = 3 C_{n-1} - 4 C_{n-2} - C_{n-3} \]
\[ S_4 = 4 (C_{n-1} - 3 C_{n-2} + C_{n-3}) + \Delta(  3 C_{n-1} - 4 C_{n-2} - C_{n-3} ) = 4 C_{n-1} - 9 C_{n-2} - C_{n-4} \]
\[ S_5 = 5 (C_{n-1} - 4 C_{n-2} + 3 C_{n-3}) + \Delta( 4 C_{n-1} - 9 C_{n-2} - C_{n-4}  ) = 5 C_{n-1} - 16 C_{n-2} + 6 C_{n-3} - C_{n-5}. \]
\end{example}

\setcounter{MaxMatrixCols}{30}
\begin{figure}[ht]
    \[
    \tiny
        \begin{matrix}
			N\backslash k  & -11 & -10 & -9 & -8 & -7 & -6 & -5 & -4 & -3 & -2 & -1 \\ 
			& & & & & & & & & & & \\ 
			2  &  & & & & & & & & & & \bf 1 \\ 
			3  &  & & & & & & & & & \bf 3 & 2 \\ 
			4  &  & & & & & & & & 6 & \bf 8 & 5 \\ 
			5  &  & & & & & & & 10 & 20 & \bf 23 & 14 \\ 
			6  &  & & & & & & 15 & 40 & 65 & \bf 70 & 42 \\ 
			7  &  & & & & & 21 & 70 & 145 & 214 & \bf 222 & 132 \\ 
			8  &  & & & & 28 & 112 & 280 & 514 & 717 & \bf 726 & 429 \\ 
			9  &   &  &  & 36 & 168 & 490 & 1064 & 1817 & \bf 2442 & 2431 & 1430 \\ 
			10  & & & 45 & 240 & 798 & 1988 & 3962 & 6446 & \bf 8437 & 8294 & 4862 \\ 
			11  & & 55 & 330 & 1230 & 3444 & 7784 & 14636 & 22997 & \bf 29510 & 28730 & 16796 \\ 
			12  & 66 & 440 & 1815 & 5628 & 14154 & 29924 & 53937 & 82550 & \bf 104312 & 100776 & 58786 \\ 
    \end{matrix} 
\]
\caption{Number of $k$-winnable $321$-avoiding permutations of $N$}\label{f:nums}
\end{figure}

\begin{figure}[ht]
\[ 
    \tiny
        \begin{matrix}
			N\backslash k  & -11 & -10 & -9 & -8 & -7 & -6 & -5 & -4 & -3 & -2 & -1 \\ 
			& & & & & & & & & & & \\ 
			2 &  &   &   &   &   &   &   &   &   &   &  \bf 50.0  &   \\ 
			3 &  &   &   &   &   &   &   &   &   &  \bf 60.0  &  40.0  &   \\ 
			4 &  &   &   &   &   &   &   &   &  42.8  &  \bf 57.1  &  35.7  &   \\ 
			5 &  &   &   &   &   &   &   &  23.8  &  47.6  &  \bf 54.7  &  33.3  &   \\ 
			6 &  &   &   &   &   &   &  11.3  &  30.3  &  49.2  &  \bf 53.0  &  31.8  &   \\ 
			7 &  &   &   &   &   &  4.89  &  16.3  &  33.7  &  49.8  &  \bf 51.7  &  30.7  &   \\ 
			8 &  &   &   &   &  1.95  &  7.83  &  19.5  &  35.9  &  50.1  &  \bf 50.7  &  30.0  &   \\ 
			9 &  &   &   &  0.74  &  3.45  &  10.0  &  21.8  &  37.3  &  \bf 50.2  &  50.0  &  29.4  &   \\ 
			10 &  &   &  0.26  &  1.42  &  4.75  &  11.8  &  23.5  &  38.3  &  \bf 50.2  &  49.3  &  28.9  &   \\ 
			11 &  &  0.09  &  0.56  &  2.09  &  5.85  &  13.2  &  24.8  &  39.1  &  \bf 50.1  &  48.8  &  28.5  &   \\ 
			12 & 0.03  &  0.21  &  0.87  &  2.7  &  6.8  &  14.3  &  25.9  &  39.6  &  \bf 50.1  &  48.4  &  28.2  &   \\ 
			13 & 0.07  &  0.34  &  1.18  &  3.26  &  7.61  &  15.3  &  26.7  &  40.1  &  \bf 50.0  &  48.0  &  28.0  &   \\ 
			14 & 0.13  &  0.49  &  1.47  &  3.76  &  8.31  &  16.1  &  27.4  &  40.4  &  \bf 50.0  &  47.7  &  27.7  &   \\ 
			15 & 0.19  &  0.63  &  1.75  &  4.21  &  8.92  &  16.8  &  28.0  &  40.7  &  \bf 49.9  &  47.5  &  27.5  &   \\ 
			16 & 0.26  &  0.78  &  2.01  &  4.61  &  9.46  &  17.3  &  28.5  &  41.0  &  \bf 49.9  &  47.2  &  27.4  &   \\ 
			17 & 0.33  &  0.92  &  2.26  &  4.98  &  9.93  &  17.8  &  28.9  &  41.2  &  \bf 49.8  &  47.0  &  27.2  &   \\ 
			18 & 0.4  &  1.05  &  2.48  &  5.31  &  10.3  &  18.3  &  29.2  &  41.3  &  \bf 49.7  &  46.8  &  27.1  &   \\ 
			\vdots & & & & & & & & & & & \\ 
			10^5 & 2.73  &  4.49  &  7.22  &  11.3  &  17.1  &  24.9  &  34.2  &  43.3  &  \bf 48.4  &  43.7  &  25.0  &   \\ 
			\end{matrix} \]
\caption{Percentage of $k$-winnable $321$-avoiding permutations of $N$}\label{f:pcts}
\end{figure}

\begin{lemma}\label{l:del}
Let $i \leq N-5$ and $X_i$ be a linear combination of the Catalan numbers $C_{N-1}, C_{N-2}, \cdots, C_{N-i}$.  Then,
\[ \frac{1}{4} \frac{X_i}{C_N} < \frac{\Delta X_i}{C_N} \leq \frac{1}{3} \frac{X_i}{C_N}. \]
\end{lemma}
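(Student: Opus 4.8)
The plan is to reduce the entire statement to a bound on consecutive Catalan ratios. Write $X_i = \sum_{j=1}^{i} a_j C_{N-j}$ for scalars $a_j$; by the definition of $\Delta$ we then have $\Delta X_i = \sum_{j=1}^{i} a_j C_{N-1-j}$. Since $C_N > 0$, dividing through by $C_N$ is only cosmetic, so it is equivalent to prove $\tfrac14 X_i < \Delta X_i \le \tfrac13 X_i$, and for this it is enough to control the ratio $C_{m-1}/C_m$ for the indices $m = N-j$ with $1 \le j \le i$, i.e.\ for $N-i \le m \le N-1$. The hypothesis $i \le N-5$ is precisely what guarantees $m \ge 5$ across this whole range.

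First I would record the closed form
\[ \frac{C_{m-1}}{C_m} \;=\; \frac{m+1}{2(2m-1)} \;=\; \frac{m+1}{4m-2}, \]
which is an immediate computation from $C_m = \tfrac{1}{m+1}\binom{2m}{m}$. From it the two elementary inequalities
\[ \frac14 \;<\; \frac{m+1}{4m-2} \;\le\; \frac13 \qquad (m \ge 5) \]
drop out: the left one rearranges to $4(m+1) > 4m-2$, true for every $m$, and the right one to $3(m+1) \le 4m-2$, i.e.\ $m \ge 5$ --- which is exactly why the cutoff $i \le N-5$ is the natural hypothesis, since it forces every Catalan index occurring in $X_i$ to be at least $5$. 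One should also note that $\tfrac{m+1}{4m-2}$ is strictly decreasing in $m$.

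Now set $\lambda_j = C_{N-1-j}/C_{N-j}$, so each $\lambda_j \in (\tfrac14,\tfrac13]$ and $\Delta X_i = \sum_j a_j \lambda_j C_{N-j}$; consequently
\[ \Delta X_i - \tfrac14 X_i = \sum_j a_j\bigl(\lambda_j - \tfrac14\bigr) C_{N-j}, \qquad \tfrac13 X_i - \Delta X_i = \sum_j a_j\bigl(\tfrac13 - \lambda_j\bigr) C_{N-j}, \]
with $\lambda_j - \tfrac14 > 0$ and $\tfrac13 - \lambda_j \ge 0$. When the $a_j$ are nonnegative and not all zero (so $X_i>0$), both right-hand sides are correctly signed termwise, and multiplying back by $X_i/C_N$ finishes the proof. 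The main obstacle is that the $X_i$ we actually care about --- the $S_i$ and $T_i$ of Theorem~\ref{t:rec} --- have coefficient vectors of mixed sign, so this termwise argument no longer suffices on its own. What rescues the situation is that each such $X_i$ is, up to sign, a genuine count of Dyck paths (hence strictly positive) and is produced recursively, so I would instead run an induction on $i$ built on the recurrences $T_i = T_{i-1} - \Delta T_{i-2}$ and $S_i = i\,T_i + \Delta S_{i-1}$: the base cases $T_1 = S_1 = C_{N-1}$ and $T_2 = C_{N-1}-C_{N-2}$ follow from the ratio bound directly, and in the inductive step one feeds the ratio bounds above together with the inductive estimates on $T_{i-1}, T_{i-2}, S_{i-1}$ through the recurrences. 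I expect the $S_i$ step to be the delicate one: there $\Delta$ acts on the mixed-sign quantity $S_{i-1}$ and the result is combined with the fresh positive term $i\,T_i$, so one has to check carefully that the total still lands in the half-open interval $(\tfrac14,\tfrac13]$ after dividing by $C_N$.
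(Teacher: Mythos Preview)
Your computation of the consecutive Catalan ratio $\frac{C_{m-1}}{C_m}=\frac{m+1}{4m-2}$ and the termwise argument for nonnegative coefficients is exactly what the paper does: the paper records the bound $\tfrac14<\frac{C_{m-1}}{C_m}\le\tfrac13$ for $m\ge 5$, writes $\frac{\Delta C_{N-j}}{C_N}=\frac{C_{N-j-1}}{C_{N-j}}\cdot\frac{C_{N-j}}{C_N}$, and then simply says ``the result follows by linearity.'' So up through your nonnegative-$a_j$ paragraph you have reproduced the paper's proof in full. You have then gone further than the paper by noticing that the linearity step is unjustified when the $a_j$ have mixed sign --- and you are right. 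The lemma as literally stated is false: for instance $X=C_{N-1}-3C_{N-2}$ with $N=8$ gives $X=33$, $\Delta X=6$, and $\Delta X/X=\tfrac{2}{11}<\tfrac14$, even though $i=2\le N-5$. The paper never addresses this; it is a genuine gap in the stated lemma and its proof, though harmless downstream since the lemma is only ever applied to the positive path-counts $T_i$ and $S_i$.

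Your proposed repair --- an induction on $i$ through the recurrences of Theorem~\ref{t:rec} --- is only sketched, and as sketched it does not close. Feeding the inductive bounds into $\Delta T_i=\Delta T_{i-1}-\Delta^2 T_{i-2}$ yields only $\Delta T_i<\tfrac13 T_{i-1}-\tfrac14\Delta T_{i-2}$, which is \emph{weaker} than the target $\tfrac13(T_{i-1}-\Delta T_{i-2})$; the two inductive inequalities point in opposite directions because of the minus sign. The same obstruction hits the $S_i$ step you already flagged as delicate. If you want a rigorous version of what the paper actually needs, it is cleaner to bypass the Catalan-coefficient expansion entirely and bound the ratios $T_i(N{-}1)/T_i(N)$ and $S_i(N{-}1)/S_i(N)$ directly from the closed form $T_i(N)=\frac{i+1}{N}\binom{2(N-1)-i}{N-1}$ (mentioned just before Theorem~\ref{t:rec}) and the explicit sum for $S_i$.
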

\begin{proof}
Observe that $\frac{1}{4} < \frac{C_{N-1}}{C_N} \leq \frac{1}{3}$ for all $N \geq 5$.  Since
$\frac{\Delta C_{N-i}}{C_N} = \frac{C_{N-i-1}}{C_N} = \frac{C_{N-i-1}}{C_{N-i}}
\frac{C_{N-i}}{C_N}$
we have
\[ \frac{1}{4} \frac{C_{N-i}}{C_N} < \frac{\Delta C_{N-i}}{C_N} \leq \frac{1}{3} \frac{C_{N-i}}{C_N} \]
for all $N-i \geq 5$, and the result follows by linearity.
\end{proof}

\begin{lemma}
For all $i \leq N-5$, we have 
\[ \frac{T_i}{C_N} \leq \frac{1}{3} \left(\frac{3}{4}\right)^{i-1} \]
\end{lemma}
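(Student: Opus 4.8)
The plan is to induct on $i$, reducing the whole statement to the single-step estimate $T_i < \tfrac{3}{4}\,T_{i-1}$. Granting that estimate, iterating it down to $i=1$ gives $\tfrac{T_i}{C_N} < \big(\tfrac{3}{4}\big)^{i-1}\tfrac{T_1}{C_N} = \big(\tfrac{3}{4}\big)^{i-1}\tfrac{C_{N-1}}{C_N} \le \tfrac13\big(\tfrac{3}{4}\big)^{i-1}$, where the last inequality is the bound $\tfrac{C_{N-1}}{C_N}\le\tfrac13$ (valid for $N\ge5$, hence here since $N\ge i+5\ge 6$) already recorded in the proof of Lemma~\ref{l:del}.

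Two inputs feed the single-step estimate. First, a monotonicity fact: $T_j \le T_{j-1}$ for every $j$ with $2\le j\le N-1$. For $j=2$ this reads $C_{N-1}-C_{N-2}\le C_{N-1}$, and for $j\ge3$ it follows from Theorem~\ref{t:rec}, since $T_j = T_{j-1}-\Delta T_{j-2} = T_{j-1}-T_{j-2}(N-1)$ and $T_{j-2}(N-1)\ge 0$, the $T$'s being counts of partial Dyck paths. Second, Lemma~\ref{l:del}: every $T_m$ is a linear combination of $C_{N-1},\dots,C_{N-m}$ (immediate from the recurrence in Theorem~\ref{t:rec}, or read off Example~\ref{e:forms}), so whenever $m\le N-5$ we have $\Delta T_m > \tfrac14\,T_m$.

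Now fix $i$ with $2\le i\le N-5$ and assume the claimed bound for $T_{i-1}$. If $i=2$, write $T_2 = C_{N-1}-C_{N-2} = T_1 - \Delta T_1$; Lemma~\ref{l:del} applied to $T_1=C_{N-1}$ (index $1\le N-5$, since $N\ge 7$) gives $\Delta T_1>\tfrac14\,T_1$, so $T_2 < \tfrac34\,T_1$. If $i\ge3$, Theorem~\ref{t:rec} gives $T_i = T_{i-1}-\Delta T_{i-2}$; Lemma~\ref{l:del} applied to $T_{i-2}$ (whose Catalan indices are at most $i-2\le N-7\le N-5$) gives $\Delta T_{i-2} > \tfrac14\,T_{i-2}$, and monotonicity gives $T_{i-2}\ge T_{i-1}$, whence $T_i = T_{i-1}-\Delta T_{i-2} < T_{i-1}-\tfrac14\,T_{i-1} = \tfrac34\,T_{i-1}$. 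Either way $T_i<\tfrac34\,T_{i-1}$, closing the induction, and the base case $i=1$ is the observation $\tfrac{T_1}{C_N}=\tfrac{C_{N-1}}{C_N}\le\tfrac13$.

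The crux — the step to get right — is that $T_i = T_{i-1}-\Delta T_{i-2}$ presents $T_i$ as $T_{i-1}$ \emph{minus} a quantity, so an upper bound on $T_i$ requires a \emph{lower} bound on the subtracted term; Lemma~\ref{l:del} supplies only $\Delta T_{i-2} > \tfrac14\,T_{i-2}$, which lags one index behind. Folding $T_{i-2}$ down to the smaller $T_{i-1}$ via monotonicity is exactly what lets the recursion close with ratio $\tfrac34$ and leading constant $\tfrac13$. A minor wrinkle is that the $T$-recurrence in Theorem~\ref{t:rec} only begins at $i\ge3$, so $i=2$ must be treated by hand using $T_2=C_{N-1}-C_{N-2}$; the hypothesis $i\le N-5$ (stronger than the $i\le N-3$ one might naively need) leaves ample room for every application of Lemma~\ref{l:del}.
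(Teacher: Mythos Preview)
Your proof is correct and follows essentially the same route as the paper: both argue by induction on $i$, combining the recurrence $T_i = T_{i-1} - \Delta T_{i-2}$ with the lower bound $\Delta T_{i-2} > \tfrac14 T_{i-2}$ from Lemma~\ref{l:del} and the monotonicity $T_{i-2}\ge T_{i-1}$ to obtain $T_i < \tfrac34\,T_{i-1}$, then close with $\tfrac{T_1}{C_N}\le\tfrac13$. The only cosmetic difference is that the paper treats $i=1,2$ as direct base cases, whereas you fold $i=2$ into the single-step estimate via $T_2 = T_1 - \Delta T_1$; you are also a bit more explicit about checking the index hypotheses of Lemma~\ref{l:del}.
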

\begin{proof}
It is straightforward to verify that the result holds for $i = 1$ and $i = 2$.  Suppose the result holds for $i-1$.  Then,
\[ \frac{T_i}{C_N} = \frac{T_{i-1}}{C_N} - \frac{\Delta T_{i-2}}{C_N} < \frac{T_{i-1}}{C_N} - \frac{1}{4} \frac{T_{i-2}}{C_N} \]
by Lemma~\ref{l:del}.  From their definition in terms of lattice paths, it is
also clear that the $T_i$ are decreasing in $i$ (for each fixed $N$).  Hence,
\[ \frac{T_{i-1}}{C_N} - \frac{1}{4} \frac{T_{i-2}}{C_N} \leq
\frac{T_{i-1}}{C_N} - \frac{1}{4} \frac{T_{i-1}}{C_N} = \frac{3}{4}
\frac{T_{i-1}}{C_N} \leq \frac{1}{3} \left(\frac{3}{4}\right)^{i-1} \]
by induction.
\end{proof}

\begin{theorem}\label{t:3m}
We have $\frac{S_3}{C_N} > \frac{S_i}{C_N}$ for all $N \geq 9$ and all $i > 3$.
\end{theorem}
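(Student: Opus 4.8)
The plan is to prove the equivalent assertion that $S_3 > S_i$ for all $i > 3$ by showing that $S_3, S_4, S_5, \dots$ is strictly decreasing when $N \ge 9$. Set $D_i := S_i - S_{i-1}$; it then suffices to prove $D_i < 0$ for every $i \ge 4$, since summing gives $S_3 > S_4 > \dots > S_i$.

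First I would record a recurrence for $D_i$. Subtracting consecutive instances of $S_i = i\,T_i + \Delta S_{i-1}$ (Theorem~\ref{t:rec}) yields
\[ D_i \;=\; i\,T_i - (i-1)\,T_{i-1} + \Delta D_{i-1} \qquad (i \ge 3), \]
and from Example~\ref{e:forms} one reads off $D_2 = S_2 - S_1 = C_{N-1} - C_{N-2}$ and $D_3 = S_3 - S_2 = C_{N-1} - 3C_{N-2} - C_{N-3}$. A short manipulation using $C_{N-1}/C_{N-2} = (4N-6)/N$ and $C_{N-3}/C_{N-2} = (N-1)/(4N-10)$ shows $D_3 > 0$ exactly when $N \ge 9$, which is the source of the hypothesis (it is also precisely the point at which $S_3$ overtakes $S_2$ and becomes the genuine maximum).

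Next I would show the inhomogeneous term $i\,T_i - (i-1)\,T_{i-1}$ is negative for every $i \ge 4$. The proof of the lemma bounding $T_i/C_N$ already established $T_i < \tfrac34 T_{i-1}$, and feeding this in gives
\[ i\,T_i - (i-1)\,T_{i-1} \;<\; \left(\tfrac{3i}{4} - i + 1\right)T_{i-1} \;=\; \left(1 - \tfrac{i}{4}\right)T_{i-1} \;\le\; 0, \]
strictly for $i \ge 5$ since $T_{i-1} > 0$; the case $i = 4$ is handled instead by the explicit identity $4T_4 - 3T_3 = C_{N-1} - 6C_{N-2} + 4C_{N-3}$, which is negative by the Catalan-ratio estimates. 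Then induct on $i$: for the base case, $D_4 = C_{N-1} - 5C_{N-2} + C_{N-3} - C_{N-4} < 0$ because $C_{N-1} < 4C_{N-2}$ and $C_{N-3} < C_{N-2}$; and if $D_{i-1} < 0$ then $\Delta D_{i-1} = D_{i-1}(N-1) < 0$, whence $D_i = [\,i\,T_i - (i-1)\,T_{i-1}\,] + \Delta D_{i-1} < 0$. Summing $D_4, \dots, D_i$ then gives $S_3 > S_i$.

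The main obstacle is bookkeeping the ranges of validity. As derived in the excerpt (the authors deliberately avoid the closed form for $T_i(N)$), the estimate $T_i < \tfrac34 T_{i-1}$ is only guaranteed within the range of the preceding lemma, so the induction above directly covers only moderate $i$, and one must treat the large indices $N-4 \le i \le N$ separately. For those, $S_i$ counts $(N-i)$-winnable $321$-avoiding permutations with $N-i \le 4$; via the lattice-path model this is bounded by a low-degree polynomial in $N$ (for instance $S_{N-1}(N) = \binom{N}{2}$ and $S_{N-2}(N) = 2\binom{N}{3}$), hence eventually far below the lower bound $S_3/C_N > \tfrac{41}{108}$, which follows by applying $C_{N-1}/C_N > \tfrac14$ and $C_{N-2}/C_{N-1}, C_{N-3}/C_{N-2} \le \tfrac13$ to $S_3 = 3C_{N-1} - 4C_{N-2} - C_{N-3}$. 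The finitely many remaining small values of $N$ are checked against Figure~\ref{f:nums}. An alternative that sidesteps some of this case analysis is to bound $S_i/C_N$ directly from the recurrence together with $T_j/C_N \le \tfrac13(\tfrac34)^{j-1}$, yielding $S_i/C_N \le \tfrac{4i}{5}(\tfrac34)^i$; this beats $\tfrac{41}{108}$ once $i$ exceeds a fixed constant, but it is too crude near $i = 4, 5$ — where $S_i/C_N$ is still around $0.43$ — so one would still invoke the explicit formulas of Example~\ref{e:forms} for the first few values of $i$.
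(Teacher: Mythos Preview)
Your approach—proving that $S_3 > S_4 > S_5 > \cdots$ via the differences $D_i = S_i - S_{i-1}$—is genuinely different from the paper's, which instead establishes the absolute bound $S_i/C_N < \tfrac38$ for $i \ge 5$ by iterating
\[
\frac{S_i}{C_N} \;\le\; \frac{i}{3}\Bigl(\tfrac34\Bigr)^{i-1} + \frac13\,\frac{S_{i-1}}{C_N}
\]
(this is where Lemma~\ref{l:del} enters, converting $\Delta S_{i-1}$ into a bound at the \emph{same} $N$), then checks $i=5,\dots,11$ explicitly and compares $S_1,\dots,S_4$ via Example~\ref{e:forms}. Your route yields a stronger conclusion (strict monotonicity past $i=3$) and is conceptually clean, but it forces you to confront the $N$-shift in $\Delta$ directly rather than absorbing it through Lemma~\ref{l:del}.

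That shift is the one place where your write-up needs care. The step ``if $D_{i-1}<0$ then $\Delta D_{i-1}=D_{i-1}(N-1)<0$'' is not a valid induction on $i$ at a \emph{fixed} $N$: knowing $D_{i-1}(N)<0$ says nothing about $D_{i-1}(N-1)$. What actually happens when you unroll the recurrence for $D_i(N)$ is that you walk along the diagonal $(j,\,N-i+j)$ for $j=4,\dots,i$, and at each step you need $jT_j(N')-(j-1)T_{j-1}(N')<0$ with $N'=N-i+j$. Tracking the hypothesis of the $T$-lemma along this diagonal gives exactly the constraint $i\le N-5$ (constant along the diagonal, since $N'-j=N-i$), which is your ``moderate $i$'' range; your polynomial bounds on $S_{N-k}(N)$ for the remaining indices $i\ge N-4$ then close the argument. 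So the plan is sound, but the inductive hypothesis must be stated as a claim uniform in $N$ (e.g.\ ``$D_j(M)<0$ for all $M\ge j+1$'') rather than at a single $N$. Two minor points: since $T_i<\tfrac34 T_{i-1}$ is strict, your bound already gives $4T_4-3T_3<0$ without the separate identity; and the constant $\tfrac{41}{108}$ should be rechecked—with $C_{N-1}/C_N>\tfrac14$ and $C_{N-j}/C_{N-j+1}\le\tfrac13$ one gets $S_3/C_N>\tfrac{14}{9}\cdot\tfrac14=\tfrac{7}{18}$, which still suffices for the comparison.
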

\begin{proof}
We have
\[ \frac{S_i}{C_N} = \frac{i T_i + \Delta S_{i-1}}{C_N} \leq \frac{i}{3} \left(\frac{3}{4}\right)^{i-1} + \frac{1}{3} \frac{S_{i-1}}{C_N}. \]
An exercise using calculus proves that $\frac{i}{3}
\left(\frac{3}{4}\right)^{i-1}$ is decreasing once $i > -1/\ln(3/4)$ (which is
between $3$ and $4$) and that $\frac{i}{3} \left(\frac{3}{4}\right)^{i-1}$ is
less than $1/4$ for all $i \geq 11$.  Consequently, once $\frac{S_i}{C_N} <
\frac{3}{8}$, it remains so as $i$ increases, for all $i \geq 11$.  

In fact, using the linear combinations of Catalan numbers obtained from
Theorem~\ref{t:rec} as in Example~\ref{e:forms}, we can verify that
$\frac{S_i}{C_N} < \frac{3}{8}$ for all $5 \leq i \leq 11$ as illustrated in
Figure~\ref{f:pcts}.  More precisely, when we express $\frac{S_i}{C_N}$ as a
linear combination of ratios of Catalan numbers, the limiting value as $N
\rightarrow \infty$ can be obtained by plugging in powers of $1/4$ for each
ratio of Catalan numbers; as these limits are each smaller than $3/8$, we reduce to
a finite computation.  In detail, we use the bounds $0.25^j <
\frac{C_{N-j}}{C_N} < 0.254^j$ for $N > 95+j$ to verify that $\frac{S_i}{C_N} <
3/8$ for each of the linear combinations $i = 5, 6, \ldots, 11$ (and check
remaining finite cases for $N$ manually).

Thus, the optimal value of $\frac{S_i}{C_N}$ must occur in $i \leq 4$ for all
$N$.  Using the formulas from Example~\ref{e:forms} again, we then find that
$\frac{S_1}{C_N}$ is optimal for $N = 2$, that $\frac{S_2}{C_N}$ is optimal for
$3 \leq N \leq 8$, and that $\frac{S_3}{C_N}$ is optimal for all $N \geq 9$.
\end{proof}

\begin{corollary}
The optimal $k$-positional strategy for the game of best choice restricted to the
$321$-avoiding interview rank orders is
\[ k = \begin{cases}
        N-1 & \text{ if $N = 2$ } \\ 
        N-2 & \text{ if $3 \leq N \leq 8$ } \\ 
        N-3 & \text{ otherwise. } \\ 
\end{cases} \]
The asymptotic probability of success is 
\[ \lim_{N \rightarrow \infty} \frac{3C_{N-1}-4C_{N-2}-C_{N-3}}{C_N} = \frac{31}{64} = 0.484375. \]
\end{corollary}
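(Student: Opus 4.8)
The plan is to deduce the optimal positional strategy directly from Theorem~\ref{t:3m} and then evaluate the limiting success probability using the explicit Catalan-combination formula for $S_3$.

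First I would recall the dictionary established earlier: by Corollary~\ref{c:path} and the definition of $S_i(N)$, the quantity $S_i(N)$ counts exactly the $321$-avoiding permutations of size $N$ that are $(N-i)$-winnable, so $S_i(N)/C_N$ is the probability that the $(N-i)$-positional strategy captures the best candidate when the interview rank order is chosen uniformly at random among $321$-avoiding permutations of size $N$. Thus determining the optimal transition position $k$ is the same problem as determining, for each fixed $N$, the index $i$ that maximizes $S_i(N)/C_N$, and then setting $k = N - i$.

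Next I would simply invoke Theorem~\ref{t:3m}, whose proof already does the real work: the geometric-decay bound $\frac{T_i}{C_N} \leq \frac{1}{3}\left(\frac{3}{4}\right)^{i-1}$ together with the recurrence $\frac{S_i}{C_N} \leq \frac{i}{3}\left(\frac{3}{4}\right)^{i-1} + \frac{1}{3}\frac{S_{i-1}}{C_N}$ reduces the maximization to $i \leq 4$, and then the explicit formulas of Example~\ref{e:forms} pin down the maximizer as $i = 1$ for $N = 2$, $i = 2$ for $3 \leq N \leq 8$, and $i = 3$ for all $N \geq 9$. Translating through $k = N - i$ gives precisely the three cases in the statement, so the first assertion of the corollary is immediate.

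For the asymptotic probability I would use $S_3 = 3C_{N-1} - 4C_{N-2} - C_{N-3}$ from Example~\ref{e:forms}, so that for $N \geq 9$ the optimal strategy wins with probability $\frac{3C_{N-1} - 4C_{N-2} - C_{N-3}}{C_N}$. Since $\frac{C_{N-1}}{C_N} = \frac{(N+1)N}{(2N)(2N-1)} \rightarrow \frac{1}{4}$, telescoping gives $\frac{C_{N-j}}{C_N} \rightarrow \left(\frac{1}{4}\right)^{j}$ for each fixed $j$, whence the limit equals $3\cdot\frac{1}{4} - 4\cdot\frac{1}{16} - \frac{1}{64} = \frac{3}{4} - \frac{1}{4} - \frac{1}{64} = \frac{31}{64}$. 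I do not expect a genuine obstacle, since all the analytic difficulty is absorbed into Theorem~\ref{t:3m}; the only points requiring care are that the crossover at $N = 8$ versus $N = 9$ is \emph{sharp} (which is exactly why the finite comparisons via Example~\ref{e:forms} are unavoidable) and that the elementary limit $C_{N-1}/C_N \rightarrow 1/4$, which also underlies Lemma~\ref{l:del}, is recorded explicitly.
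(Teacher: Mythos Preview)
Your proposal is correct and follows essentially the same approach as the paper: the corollary has no separate proof there because the optimal-$k$ assertion is exactly the content of the final paragraph of the proof of Theorem~\ref{t:3m}, and the asymptotic value follows immediately from the formula $S_3 = 3C_{N-1} - 4C_{N-2} - C_{N-3}$ in Example~\ref{e:forms} together with $C_{N-1}/C_N \to 1/4$. Your explicit verification of the ratio $\frac{C_{N-1}}{C_N} = \frac{(N+1)N}{(2N)(2N-1)}$ and the arithmetic $3/4 - 1/4 - 1/64 = 31/64$ just make visible what the paper leaves implicit.
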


Using Andr\'e's reflection method or a straightforward induction argument, one can show that the number of partial Dyck paths (i.e. lying above the line $y = x$) from $(0,0)$ to $(a,b)$ (where $a<b$) is given by the formula
\[ C_{(a,b)} = {{a+b} \choose a}\frac{b-a+1}{b+1}. \]
Using this, we can also give a direct count of the Dyck paths for which column
$k$ lies between the last two northeast corners of the path.  
\begin{theorem}
The probability that a 321-avoiding permutation of length $N$ is $k$-winnable is
\[ \frac{1}{C_N} \sum_{i=1}^{N-k} { {(k-1)+(N-i)} \choose {k-1} } \frac{(N-k-i+2)}{(N-i)+1}(N-k-i+1). \]
\end{theorem}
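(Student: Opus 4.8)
The plan is to give a bijective decomposition of the relevant Dyck paths and then reindex. By Corollary~\ref{c:path} together with the characterization of $k$-winnability recalled earlier, a $321$-avoiding permutation of size $N$ is $k$-winnable precisely when the associated Dyck path of size $N$ has column $k$ lying weakly right of its next-to-last northeast corner and strictly left of its last northeast corner; call such a path \emph{good} (these are exactly the paths counted by $S_{N-k}(N)$). So it suffices to count good paths and divide by $C_N$.

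First I would record a rigidity property: in \emph{any} Dyck path, the east step of the last northeast corner is taken in the top row (i.e.\ at height $N$). Indeed, after that east step the rest of the path contains no north step immediately followed by an east step, so it is a (possibly empty) block of east steps followed by a block of north steps; since the path stays weakly above $y = x$ and the eastward block comes first, that block must be traversed at height $N$, which forces the corner itself to have height $N$. (Equivalently, the last left-to-right maximum of a permutation is its overall maximum $N$, and a northeast corner in column $c$ at height $h$ encodes a left-to-right maximum of value $h$ in position $c$.)

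Next I would decompose a good path $q$. Let $c$ be the column of its last northeast corner; goodness gives $k+1 \le c \le N$, and by the rigidity property columns $c, c+1, \ldots, N$ of $q$ all sit at height $N$. Goodness also forbids a northeast corner in columns $k+1, \ldots, c-1$, which forces columns $k, k+1, \ldots, c-1$ to share one common height $m$, with $c-1 \le m \le N-1$ (lower bound because the path stays above $y = x$, upper bound because column $c$ is a genuine corner, so its height $N$ strictly exceeds $m$). Deleting this flat platform at height $m$, the vertical run from height $m$ up to height $N$, the corner step into column $c$, and the trailing east steps to $(N,N)$, what remains of $q$ is exactly an arbitrary partial Dyck path from $(0,0)$ to $(k-1, m)$ --- namely the first $k-1$ east steps of $q$ together with the north steps preceding its $k$th east step. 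Conversely, any choice of $c \in \{k+1, \ldots, N\}$, of $m \in \{c-1, \ldots, N-1\}$, and of a partial Dyck path to $(k-1,m)$ reassembles into a unique good path, so this is a bijection. Using the count $C_{(k-1,m)}$ of partial Dyck paths to $(k-1,m)$ (legitimate since $k-1 < m$ in every term), the number of good paths is
\[ \sum_{c=k+1}^{N} \sum_{m=c-1}^{N-1} C_{(k-1,m)} \;=\; \sum_{m=k}^{N-1} (m-k+1)\, C_{(k-1,m)}, \]
where I have summed over $c$ first (for fixed $m$, the index $c$ runs over $k+1, \ldots, m+1$). Substituting $i = N-m$ and inserting $C_{(k-1,N-i)} = \binom{(k-1)+(N-i)}{k-1} \frac{N-i-k+2}{N-i+1}$ turns the right-hand side into $\sum_{i=1}^{N-k} \binom{(k-1)+(N-i)}{k-1} \frac{N-k-i+2}{(N-i)+1}(N-k-i+1)$, and dividing by $C_N$ finishes.

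The substantive part is the middle step: identifying the rigid shape that goodness imposes on a Dyck path --- the top-row last corner together with the single flat platform of columns joining column $k$ to the last corner --- and verifying that stripping it away gives an honest bijection with the triples $(c, m, \text{partial path})$. The closing reindexing and the substitution of the closed form for $C_{(k-1,m)}$ are routine.
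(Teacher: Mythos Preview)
Your proof is correct and follows essentially the same approach as the paper's: both decompose a good Dyck path according to the height $b$ (your $m$) that the path has just before its $k$th east step, count $C_{(k-1,b)}$ partial paths up to that point, and multiply by the $b-k+1$ ways to complete the path to a $k$-winnable one. Your version is considerably more detailed---you explicitly parameterize the completions by the column $c$ of the last northeast corner, prove the rigidity lemma that this corner sits at height $N$, and verify the bijection carefully---whereas the paper compresses all of this into the single sentence ``once the path passes through $(a,b)$, there are $b-k+1$ ways to complete it so that it is $k$-winnable.''
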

\begin{proof}
Set $a = k-1$, and let $b$ range over $k, k+1, k+2, \ldots, N-1$.  Once the
path passes through $(a,b)$, there are $b-k+1$ ways to complete it so that it
is $k$-winnable.
\end{proof}

\bigskip
\section{Conclusions}

It seems fair to say that these results are somewhat surprising and further
investigation is warranted.  The ``bar-raising'' model has a robust strategy
but only allows a $25\%$ success rate.  The optimal strategy in the
``disappointment-free'' model reviews and rejects most of the applicants yet
has a success rate that is close to $50\%$.   Remarkably, these are not
mutually exclusive and the $k = N-3$ positional strategy is asymptotically
optimal in both models simultaneously.

\bigskip
\section*{Acknowledgements}

This project was supported by the James Madison University Program of Grants for
Faculty Assistance.  We are grateful to an anonymous referee for several
insightful suggestions on an earlier draft of this work.


\providecommand{\bysame}{\leavevmode\hbox to3em{\hrulefill}\thinspace}
\providecommand{\MR}{\relax\ifhmode\unskip\space\fi MR }
\providecommand{\MRhref}[2]{%
  \href{http://www.ams.org/mathscinet-getitem?mr=#1}{#2}
}
\providecommand{\href}[2]{#2}

\end{document}